\theoremstyle{plain}
\newtheorem{theorem}{Theorem}[section]
\newtheorem{corollary}[theorem]{Corollary}
\newtheorem{example}[theorem]{Example}
\newtheorem{fexample}[theorem]{Further Examples}
\newtheorem{proposition}[theorem]{Proposition}
\theoremstyle{remark}
\numberwithin{equation}{section}
\newcommand{\reel}{\mathbb{R}}
\newcommand{\ent}{\mathbb{Z}}
\newcommand{\comp}{\mathbb{C}}
\newcommand{\tore}{\mathbb{T}}
\newcommand{\ds}{\displaystyle}
\newcommand{\abs}[1]{\left\vert #1\right\vert }
\newcommand{\adh}[1]{\overline{#1} }
\newcommand{\N}[1]{{\left\Vert#1\right\Vert}}
\newcommand{\bg}{\medskip\goodbreak}
\newcommand{\impl}{{\quad\Longrightarrow\quad}}
\newcommand{\vers}{{\,\longrightarrow\,}}
\DeclareMathOperator{\sgn}{sgn}
\DeclareMathOperator{\li}{Li}
\newcommand{\itemref}[1]{\eqref{#1}}
\newenvironment{enumeratei}{\begin{enumerate}%
	[\itshape i.]}{\end{enumerate}}
\title[Exact Evaluation Of Some Highly Oscillatory Integrals]
{Exact Evaluation Of Some Highly Oscillatory Integrals}
\author[Omran Kouba]{Omran Kouba$^\dag$}
\address{Department of Mathematics \\
Higher Institute for Applied Sciences and Technology\\
P.O. Box 31983, Damascus, Syria.}
\email{omran\_kouba@hiast.edu.sy}
\keywords{Fourier series, analytic functions, power series expansion, Bernoulli polynomials, Bessel functions,
polylogarithms.}
\subjclass[2010]{11B68, 26A42, 30D10, 33B30, 42B05.}
\thanks{$^\dag$ Department of Mathematics, Higher Institute for Applied Sciences and Technology.}
\begin{document}
\parindent=0pt
\begin{abstract}
In this note a general result is proved that can be used  to evaluate exactly a class of highly
oscillatory integrals. \par
\end{abstract}
\smallskip\goodbreak

\parindent=0pt
\maketitle
\section{\sc Introduction }\label{sec1}
\bg
\parindent=0pt
\qquad The first of the ten $\$100$,$100$-digit challenges \cite{ter,siam},  proposed to calculate 
the integral $\int_0^1x^{-1}\cos(x^{-1}\log x)\,dx$ to ten digits. This was a real challenge since
the integrand oscillates infinitely often inside the interval of integration.
\bg
\qquad In this note, a general result is proved that will allow us to determine exactly the value
 of some highly oscillating integrals.
To give you the flavour of what we will prove, here is one of our results :
\begin{equation}\label{E:eq1}
\int_0^{\pi/2}\frac{d\theta}{1+\sin^2(\tan\theta)}
=\frac{\pi}{2\sqrt{2}}\left(\frac{e ^2+3-2\sqrt2}{ e^2-3+2\sqrt2}\right),
\end{equation}
where the graph of the integrand is depicted in the following Figure \ref{fig1}.

\begin{figure}[!h]
\begin{center}
\includegraphics[width=0.5\textwidth]{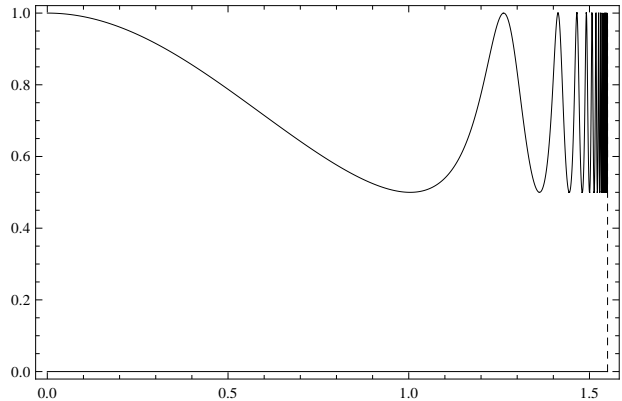}
\caption{The graph of the function $x\mapsto\frac{1}{1+\sin^2(\tan x)}$ on $\left[0,\frac{\pi}{2}\right)$.}\label{fig1}
\end{center}
\end{figure}

\qquad In section \ref{sec2} we will prove our main results 
and in section \ref{sec4} we will give some detailed examples and applications.
\bg

\bg

\section{\sc The Main Results}\label{sec2}
\nobreak

\qquad First, let us set the framework of our investigation. Our starting point will be 
a $2\pi$-periodic locally integrable function $f\in L^1(\tore)$.
For a nonzero real number
$\lambda$, we are interested in the evaluation of the integrals:
\begin{align}
\mathcal{I}_\lambda(f) &=\int_0^\infty\frac{f_e(\theta)}{\lambda^2+\theta^2}\,d\theta
=\frac{1}{\abs{\lambda}}\int_0^{\pi/2}f_e(\lambda \tan x)\,dx,  \label{E:4I}\\
\noalign{\text{and}}\notag\\
\mathcal{J}_\lambda(f) &=\int_0^\infty\frac{\theta f_o(\theta)}{\lambda^2+\theta^2}\,d\theta
=\int_0^{\pi/2}f_o(\abs{\lambda} \tan x)\tan x\,dx, \label{E:4J}
\end{align}
where $f_e$ and $f_o$ are, respectively, the even and the odd components 
of $f$. They
are defined on $\reel$ by the formul\ae:
\begin{equation}\label{E:4feo}
f_e(t) =\frac{f(t)+f(-t)}{2} \quad\text{and} \quad
f_o(t) =\frac{f(t)-f(-t)}{2}.
\end{equation}
\bg
\qquad On the other hand,  the Cauchy principal value $\mathcal{L}_\lambda(f)$ of the integral
$\int_{-\infty}^{\infty}\frac{f(t)}{t+i\lambda}\,d t$, plays an important
role  in this study. Recall that $\mathcal{L}_\lambda(f)$ is defined as follows:

\begin{equation}\label{E:3L}
\mathcal{L}_\lambda(f)=\lim_{a\to\infty}\int_{-a}^{a}\frac{f(t)}{t+i\lambda}\, d t
\end{equation}

\qquad In the next proposition, we prove the convergence of the integrals defined by \eqref{E:4I} and \eqref{E:4J},
and we describe their relationship to the principal value defined by \eqref{E:3L}.\bg

\begin{proposition} \label{pr41}
Consider $f\in L^1(\tore)$, and a  nonzero real $\lambda$. 
\begin{enumeratei}
\item \label{pr411} The integral $\ds \int_0^\infty\frac{f(t)}{t^2+\lambda^2}\,dt$ is absolutely convergent.
\item \label{pr412} If $\int_\tore f(t)\,dt=0$, then the integral
$\ds \int_0^\infty\frac{t f(t)}{t^2+\lambda^2}\,dt$ is convergent.
\item \label{pr413} The principal value $\mathcal{L}_\lambda(f)$ is well defined and
\begin{equation}\label{E:pr41}
\mathcal{L}_\lambda(f)= 2\left(\mathcal{J}_\lambda(f)-i\lambda \mathcal{I}_\lambda(f)\right). 
\end{equation}
where $\mathcal{I}_\lambda(f)$ and $\mathcal{J}_\lambda(f)$ are defined by \eqref{E:4I} and \eqref{E:4J}, respectively.
\end{enumeratei}
\end{proposition}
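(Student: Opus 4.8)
The plan is to treat the three claims in order, using periodicity for the first two and a parity decomposition for the third.

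For \itemref{pr411} I would exploit the periodicity of $f$ by splitting the half-line into the blocks $[2\pi n,2\pi(n+1))$, $n\ge 0$. On each block the weight $(t^2+\lambda^2)^{-1}$ is at most $((2\pi n)^2+\lambda^2)^{-1}\le(2\pi n)^{-2}$ for $n\ge1$ (and at most $\lambda^{-2}$ on the first block), while $\int_{2\pi n}^{2\pi(n+1)}\abs{f(t)}\,dt$ equals the fixed constant $\N{f}_{L^1(\tore)}$ by periodicity. Summing the resulting bound against the convergent series $\sum_n n^{-2}$ yields absolute convergence.

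For \itemref{pr412} the weight $g(t)=t/(t^2+\lambda^2)$ only decays like $1/t$, so I cannot argue by absolute convergence and must instead extract cancellation from the hypothesis $\int_\tore f=0$. The key observation is that the primitive $F(t)=\int_0^t f(s)\,ds$ is $2\pi$-periodic — indeed $F(t+2\pi)-F(t)=\int_\tore f=0$ — hence bounded and absolutely continuous. Integrating by parts on $[0,a]$ gives
\[
\int_0^a g(t)f(t)\,dt=\bigl[g(t)F(t)\bigr]_0^a-\int_0^a g'(t)F(t)\,dt.
\]
The boundary term vanishes at $0$ (since $g(0)=F(0)=0$) and tends to $0$ as $a\to\infty$ (since $F$ is bounded and $g(a)\to0$). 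Because $g'(t)=(\lambda^2-t^2)/(t^2+\lambda^2)^2=O(t^{-2})$ and $F$ is bounded, the integral $\int_0^\infty g'(t)F(t)\,dt$ converges absolutely, so the left-hand side converges as $a\to\infty$.

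For \itemref{pr413} I would write $1/(t+i\lambda)=(t-i\lambda)/(t^2+\lambda^2)$ and decompose $f=f_e+f_o$. Expanding the product and integrating over the symmetric interval $[-a,a]$, the two odd integrands $tf_e(t)/(t^2+\lambda^2)$ and $f_o(t)/(t^2+\lambda^2)$ integrate to $0$, leaving only the even contributions
\[
\int_{-a}^a\frac{f(t)}{t+i\lambda}\,dt=2\int_0^a\frac{t f_o(t)}{t^2+\lambda^2}\,dt-2i\lambda\int_0^a\frac{f_e(t)}{t^2+\lambda^2}\,dt.
\]
Since $f_e$ and $f_o$ are again in $L^1(\tore)$, with $\int_\tore f_o=0$ because $f_o$ is odd, parts \itemref{pr411} and \itemref{pr412} (applied to $f_e$ and $f_o$ respectively) guarantee that both integrals on the right converge as $a\to\infty$, to $\mathcal{I}_\lambda(f)$ and $\mathcal{J}_\lambda(f)$. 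Passing to the limit shows simultaneously that $\mathcal{L}_\lambda(f)$ is well defined and that the identity \eqref{E:pr41} holds. The main obstacle is \itemref{pr412}: unlike \itemref{pr411} it genuinely fails without the mean-zero hypothesis, and the whole point is to convert that hypothesis into the boundedness of the periodic primitive $F$, so that an integration by parts upgrades the borderline $1/t$ decay into the integrable $1/t^2$ decay of $g'$. Everything else is either a routine comparison estimate or bookkeeping with parities.
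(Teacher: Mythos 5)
Your proposal is correct and follows essentially the same route as the paper: the same block decomposition over periods for \itemref{pr411}, the same integration by parts against the bounded $2\pi$-periodic primitive $F$ for \itemref{pr412}, and the same parity splitting of $(t-i\lambda)f(t)/(t^2+\lambda^2)$ over symmetric intervals for \itemref{pr413}, with $\int_\tore f_o=0$ feeding back into part \itemref{pr412}. The only cosmetic difference is that you justify the integration by parts via absolute continuity of $F$, whereas the paper invokes Fubini's theorem; both are valid.
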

\begin{proof}
Indeed, the first point is easy since, for $k>0$, we have
\[
\int_{2 k\pi}^{2(k+1)\pi}\frac{\abs{ f(t)}}{t^2+\lambda^2}\,dt=
\int_{0}^{2\pi}\frac{ \abs{f(t)}}{(t+2k\pi)^2+\lambda^2}\,dt\leq\frac{1}{4k^2\pi^2}\int_{0}^{2\pi} \abs{f(t)}\,dt
\] 
So
\[
\int_0^\infty\frac{\abs{ f(t)}}{t^2+\lambda^2}\,dt
\leq\left(\frac{1}{\lambda^2}+\frac{1}{4\pi^2}\sum_{k=1}^\infty\frac{1}{k^2}\right)\int_{0}^{2\pi} \abs{f(t)}\,dt<+\infty,
\]
and \itemref{pr411} follows.\bg
\qquad To prove \itemref{pr412} we consider the function $F$ defined by $F(x)=\int_0^xf(t)\,dt$. Since
$f$ belongs to $L^1(\tore)$ and $\int_\tore f(t) dt=0$ we conclude immediately that
$F$ is continuous and $2\pi$-periodic. In particular, $F$ is bounded  on $\reel$. \bg
\qquad An integration by parts shows that, for $x>0$, we have
\[\int_0^x\frac{t}{\lambda^2+t^2}f(t)\,dt=\frac{x}{\lambda^2+x^2}F(x)-
\int_0^x\frac{\lambda^2-t^2}{(\lambda^2+t^2)^2}F(t)\,d t
\]
Indeed, this version of ``integration by parts'' is a dirct application of Fubini's theorem, see for instance \cite[Theorem~5.2.3]{ath}. 
Now, the following inequality
\[
\forall\,t\geq0,\quad\abs{\frac{\lambda^2-t^2}{(\lambda^2+t^2)^2}F(t)}\leq \frac{M}{\lambda^2+t^2} 
\qquad \hbox{where $M=\sup_{\reel}\abs{F}$,}
\]
proves the absolute convergence of the integral $\ds \int_0^\infty\frac{\lambda^2-t^2}{(\lambda^2+t^2)^2}F(t)dt$.
This implies the convergence of $\ds \int_0^\infty\frac{t}{\lambda^2+t^2}f(t)dt$ and shows that
\[
\int_0^\infty\frac{t}{\lambda^2+t^2}f(t)dt=\int_0^\infty\frac{t^2-\lambda^2}{(\lambda^2+t^2)^2}F(t)\,dt.
\]

\qquad On the other hand, for $a>0$, we have
\begin{align*}
\int_{-a}^a\frac{f(t)}{t+i\lambda}\,d t
&=\int_{-a}^a\frac{(t-i\lambda)(f_e(t)+f_o(t))}{\lambda^2+t^2}\,dt\\
&=\int_{-a}^a\frac{t f_e(t)-i\lambda f_o(t)}{\lambda^2+t^2}\,d\ t
+\int_{-a}^a\frac{t f_o(t)-i\lambda\,f_e(t)}{\lambda^2+t^2}\,dt\\
&=2\int_{0}^a\frac{t f_o(t)-i\lambda\,f_e(t)}{\lambda^2+t^2}\,dt
\end{align*}
because $t\mapsto  t f_e(t)-i\lambda f_o(t)$ is odd,
 and $t\mapsto  t f_o(t)-i\lambda\,f_e(t)$ is even. \bg
\qquad Finally, noting that $\int_\tore f_o(t)dt=0$ since $f_o$ is odd, and using \itemref{pr411} and \itemref{pr412}, we
obtain \eqref{E:pr41} by letting $a$ tend to $+\infty$, and the Proposition follows.
\end{proof}

\bg
\qquad Using the notation of the preceding proposition,  and the parity of $\lambda\mapsto\mathcal{I}_\lambda(f)$ and
$\lambda\mapsto\mathcal{J}_\lambda(f)$, we obtain the following corollary that reduces the determination of 
$\mathcal{I}_\lambda(f)$ and $\mathcal{J}_\lambda(f)$ to that of $\mathcal{L}_\lambda(f)$.

\bg
\begin{corollary}\label{co41} Using the notation of Proposition \ref{pr41}, we have
\begin{align}
\mathcal{I}_\lambda(f)&=\frac{i}{4\lambda}\left(\mathcal{L}_{\lambda}(f)-\mathcal{L}_{-\lambda}(f)\right),\label{E:co411}\\
\mathcal{J}_\lambda(f)&=\frac{1}{4}\left(\mathcal{L}_{\lambda}(f)+\mathcal{L}_{-\lambda}(f)\right).\label{E:co412}
\end{align}
\end{corollary}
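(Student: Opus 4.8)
The plan is to exploit two ingredients: the explicit identity \eqref{E:pr41} from Proposition \ref{pr41}, and the elementary observation that both $\lambda\mapsto\mathcal{I}_\lambda(f)$ and $\lambda\mapsto\mathcal{J}_\lambda(f)$ are \emph{even} functions of $\lambda$. The parity is immediate from the defining formul\ae\ \eqref{E:4I} and \eqref{E:4J}: in each case the integrand depends on $\lambda$ only through $\lambda^2$, so $\mathcal{I}_{-\lambda}(f)=\mathcal{I}_\lambda(f)$ and $\mathcal{J}_{-\lambda}(f)=\mathcal{J}_\lambda(f)$. Everything then reduces to solving a $2\times2$ linear system for the two unknowns $\mathcal{I}_\lambda(f)$ and $\mathcal{J}_\lambda(f)$.

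First I would record \eqref{E:pr41} as it stands,
\[
\mathcal{L}_\lambda(f)=2\mathcal{J}_\lambda(f)-2i\lambda\,\mathcal{I}_\lambda(f),
\]
and then substitute $-\lambda$ for $\lambda$. Using the evenness just noted, $\mathcal{J}_{-\lambda}(f)=\mathcal{J}_\lambda(f)$ and $\mathcal{I}_{-\lambda}(f)=\mathcal{I}_\lambda(f)$, so the substituted identity becomes
\[
\mathcal{L}_{-\lambda}(f)=2\mathcal{J}_\lambda(f)+2i\lambda\,\mathcal{I}_\lambda(f).
\]
These two relations together form the promised linear system.

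Adding the two displayed identities cancels the $\mathcal{I}$-terms and gives $\mathcal{L}_\lambda(f)+\mathcal{L}_{-\lambda}(f)=4\mathcal{J}_\lambda(f)$, which is exactly \eqref{E:co412}. Subtracting them cancels the $\mathcal{J}$-terms and gives $\mathcal{L}_\lambda(f)-\mathcal{L}_{-\lambda}(f)=-4i\lambda\,\mathcal{I}_\lambda(f)$; solving for $\mathcal{I}_\lambda(f)$ and using $\tfrac{1}{-4i\lambda}=\tfrac{i}{4\lambda}$ yields \eqref{E:co411}.

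There is no genuine obstacle here—once the parity is in hand the result is pure linear algebra. The only two points that demand a little care are verifying that the parity claim really follows directly from \eqref{E:4I}--\eqref{E:4J} (so that it may be invoked without further justification), and the sign-and-factor bookkeeping when inverting $-4i\lambda$ to recover the coefficient $\tfrac{i}{4\lambda}$ in \eqref{E:co411}.
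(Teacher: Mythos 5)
Your proof is correct and is exactly the argument the paper intends: the paper derives the corollary from the identity \eqref{E:pr41} of Proposition \ref{pr41} together with the evenness of $\lambda\mapsto\mathcal{I}_\lambda(f)$ and $\lambda\mapsto\mathcal{J}_\lambda(f)$, which is precisely your $2\times2$ linear system. Your sign bookkeeping ($\tfrac{1}{-4i\lambda}=\tfrac{i}{4\lambda}$) and the parity justification from \eqref{E:4I}--\eqref{E:4J} are both right, so nothing is missing.
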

\bg

\qquad Thus, the question is reduced to determining $\mathcal{L}_{\lambda}(f)$. The next proposition expresses differently this quantity.
\bg      

\begin{proposition}\label{pr42} For $f\in L^1(\tore)$, and a  nonzero real $\lambda$, we have
\begin{equation} 
\mathcal{L}_\lambda(f)=\frac{1}{2}\int_0^{2\pi}\cot\left(\frac{t+i\lambda}{2}\right)\,f(t)dt
\end{equation}
where $\mathcal{L}_\lambda(f)$ is defined by \eqref{E:3L}.
\end{proposition}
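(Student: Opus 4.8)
The plan is to exploit the $2\pi$-periodicity of $f$ to fold the whole real line onto a single fundamental period, after which the kernel $1/(t+i\lambda)$ is replaced by a partial-fraction sum that can be recognized as the Mittag-Leffler expansion of the cotangent. Since Proposition \ref{pr41} already guarantees that the principal value $\mathcal{L}_\lambda(f)$ exists, I do not need to work with the continuous limit in \eqref{E:3L}: it suffices to evaluate that limit along the convenient subsequence $a=a_N:=(2N+1)\pi$, which produces a symmetric range of periods.

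First I would split $[-a_N,a_N]$ into the $2N+1$ intervals $[(2k-1)\pi,(2k+1)\pi]$ for $-N\leq k\leq N$, and on each of them perform the substitution $t=s+2k\pi$ with $s\in[-\pi,\pi]$. Using $f(s+2k\pi)=f(s)$, this gives
\[
\int_{-a_N}^{a_N}\frac{f(t)}{t+i\lambda}\,dt=\int_{-\pi}^{\pi} f(s)\,g_N(s)\,ds,\qquad g_N(s):=\sum_{k=-N}^{N}\frac{1}{s+2k\pi+i\lambda}.
\]
Setting $w=\frac{s+i\lambda}{2\pi}$ and factoring out $2\pi$, I would write $g_N(s)=\frac{1}{2\pi}\sum_{k=-N}^{N}\frac{1}{w+k}$ and invoke the classical Mittag-Leffler expansion
\[
\pi\cot(\pi w)=\lim_{N\to\infty}\sum_{k=-N}^{N}\frac{1}{w+k},
\]
valid for all non-integer $w$. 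Since $\pi w=\frac{s+i\lambda}{2}$, this shows that $g_N(s)$ converges to $\frac{1}{2}\cot\!\left(\frac{s+i\lambda}{2}\right)$ for every $s\in[-\pi,\pi]$.

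To finish, I would pass to the limit inside the integral. The main point to check — and the one delicate step, since the cotangent series converges only conditionally rather than absolutely — is the justification of this interchange. It rests on the observation that, because $\lambda\neq0$, the variable $w=\frac{s+i\lambda}{2\pi}$ runs over the compact horizontal segment $\{\,\mathrm{Re}\,w\in[-\tfrac12,\tfrac12],\ \mathrm{Im}\,w=\tfrac{\lambda}{2\pi}\,\}$, which stays bounded away from the integer poles of $\cot(\pi w)$; grouping the terms $\frac{1}{w+k}+\frac{1}{w-k}=\frac{2w}{w^2-k^2}=O(k^{-2})$ yields uniform convergence of $g_N$ to its limit on this segment. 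Hence the sequence $g_N$ is uniformly bounded there, and since $f\in L^1(-\pi,\pi)$ the dominated convergence theorem applies, giving
\[
\mathcal{L}_\lambda(f)=\frac{1}{2}\int_{-\pi}^{\pi}\cot\!\left(\frac{s+i\lambda}{2}\right)f(s)\,ds.
\]
A final appeal to the $2\pi$-periodicity of the integrand — note that $s\mapsto\cot\!\left(\frac{s+i\lambda}{2}\right)$ has period $2\pi$ because $\cot$ has period $\pi$ — lets me shift the range from $[-\pi,\pi]$ to $[0,2\pi]$, which is exactly the claimed identity.
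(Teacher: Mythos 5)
Your proof is correct and takes essentially the same route as the paper: both arguments rest on the Mittag-Leffler expansion $\cot z=\lim_{n\to\infty}\sum_{k}\frac{1}{z+\pi k}$, its uniform convergence on the compact horizontal segment determined by $\lambda\neq 0$, and the $2\pi$-periodicity of $f$ to identify the folded partial-fraction sums with the truncated principal-value integrals. The paper simply runs the same computation in the opposite direction (expanding the cotangent under the integral over $[0,2\pi]$ and unfolding into $\int_{-2\pi n}^{2\pi n}$), so the two proofs differ only in orientation and in the symmetric versus asymmetric truncation of the sum.
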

\begin{proof}
Recall that,(see \cite[pages~187-190]{ahl},)
$$\cot(z)=\frac{1}{z}+\sum_{n=1}^\infty\left(\frac{1}{z-\pi n}+\frac{1}{z+\pi n}\right)=\lim_{n\to\infty}\sum_{k=-n}^{n-1}\frac{1}{z+\pi k},$$
with normal convergence on every compact set $K$ contained in $\comp\setminus \pi\ent$. 
Applying this to the compact segment $K=\left\{\frac{t+i\lambda}{2}:t\in[0,2\pi]\right\}$, and recalling that $f\in L^1(\tore)$, we conclude that
\begin{align*}
\int_0^{2\pi}\cot\left(\frac{t+i\lambda}{2}\right)\,f(t)dt
&=\lim_{n\to\infty}\sum_{k=-n}^{n-1}\left(\int_{0}^{2\pi}\frac{2f(t)}{t+2\pi k+i\lambda}\, dt\right),\\
&=2\lim_{n\to\infty}\sum_{k=-n}^{n-1}\left(\int_{2\pi k}^{2\pi(k+1)}\frac{f(t)}{t+i\lambda}\, dt\right),\\
&=2\lim_{n\to\infty}\int_{-2\pi n}^{2\pi n}\frac{f(t)}{t+i \lambda}\, dt,\\
&=2\mathcal{L}_\lambda(f).
\end{align*}
Which is the desired conclusion.
\end{proof}
\bg
\qquad The following lemma, will be of interest in formulating our main theorem.\bg
\begin{proposition} \label{pr43} For $f\in L^1(\tore)$, and a nonzero real $\lambda$, we have
\begin{equation}\label{E:pr431}
\mathcal{L}_\lambda(f)=i\pi \sum_{n\in\ent}(\sgn(n)-\sgn(\lambda))e^{-\abs{n\lambda}}C_n(f),
\end{equation}
where $\sgn(x)$ is the sign of $x$ if $x\ne0$ and $\sgn(0)=0$, and $\big(C_n(f)\big)_{n\in\ent}$ are the exponential Fourier coefficients of $f$.
\end{proposition}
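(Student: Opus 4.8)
The plan is to start from the integral representation of Proposition~\ref{pr42}, namely $\mathcal{L}_\lambda(f)=\frac12\int_0^{2\pi}\cot\big(\frac{t+i\lambda}{2}\big)f(t)\,dt$, and to expand the cotangent kernel $g(t):=\cot\big(\frac{t+i\lambda}{2}\big)$ into a Fourier series in the variable $t$. The algebraic identity I would use is $\cot z=i\,\frac{e^{2iz}+1}{e^{2iz}-1}$. Putting $z=\frac{t+i\lambda}{2}$ gives $e^{2iz}=e^{-\lambda+it}=:w$, a quantity of constant modulus $\abs{w}=e^{-\lambda}$, so that $g(t)=i\,\frac{w+1}{w-1}$.

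Next I would expand this rational function as a geometric series, distinguishing the sign of $\lambda$. If $\lambda>0$ then $\abs{w}<1$ and $i\,\frac{w+1}{w-1}=-i(1+w)\sum_{k\geq0}w^k=-i-2i\sum_{k\geq1}e^{-k\lambda}e^{ikt}$. If $\lambda<0$ then $\abs{w}>1$, and I would instead expand in powers of $u=1/w=e^{\lambda-it}$, which has $\abs{u}<1$, obtaining $i\,\frac{1+u}{1-u}=i+2i\sum_{k\geq1}e^{k\lambda}e^{-ikt}$. In either case the series converges uniformly for $t\in[0,2\pi]$, since by the Weierstrass test its terms are dominated by the constants $2e^{-k\abs{\lambda}}$, whose sum is finite.

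I would then substitute this expansion into the integral and interchange summation and integration; this is the one analytic point requiring justification, and it follows at once from the uniform convergence above together with $f\in L^1(\tore)$, because if $S_N$ denotes the $N$-th partial sum of the kernel then $\abs{\int_0^{2\pi}(S_N-g)f\,dt}\leq\N{S_N-g}_\infty\int_0^{2\pi}\abs{f}\,dt\to0$. Using $\int_0^{2\pi}e^{int}f(t)\,dt=2\pi C_{-n}(f)$ and reindexing, the case $\lambda>0$ gives $\mathcal{L}_\lambda(f)=-i\pi C_0(f)-2i\pi\sum_{n<0}e^{-\abs{n\lambda}}C_n(f)$, while the case $\lambda<0$ gives $\mathcal{L}_\lambda(f)=i\pi C_0(f)+2i\pi\sum_{n>0}e^{-\abs{n\lambda}}C_n(f)$.

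The last step is pure bookkeeping: I would verify that both of these are captured by the single expression $i\pi\sum_{n\in\ent}(\sgn(n)-\sgn(\lambda))e^{-\abs{n\lambda}}C_n(f)$, by evaluating the integer factor $\sgn(n)-\sgn(\lambda)$ separately on the ranges $n>0$, $n=0$, $n<0$ for each sign of $\lambda$; it equals $0$ on the suppressed half-line, $-2\sgn(\lambda)$ on the surviving half-line, and $-\sgn(\lambda)$ at $n=0$. I do not expect a genuine obstacle here; the only things to get right are choosing the correct geometric expansion according to the sign of $\lambda$ and tracking the coefficient of the constant term $C_0(f)$ so that it comes out as $-\sgn(\lambda)$.
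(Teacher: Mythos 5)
Your proof is correct and takes essentially the same route as the paper: both start from Proposition~\ref{pr42} and expand the cotangent kernel into the same pair of geometric series according to the sign of $\lambda$, and the coefficients you obtain are exactly the ones the paper records in \eqref{E:cng} as the Fourier coefficients of $g_\lambda(t)=\cot\left(\frac{t-i\lambda}{2}\right)$. The only difference is how the term-by-term integration is justified: you do it directly via the Weierstrass test and the estimate $\abs{\int_0^{2\pi}(S_N-g)f\,dt}\leq\N{S_N-g}_\infty\N{f}_1$, whereas the paper packages the same interchange into the convolution theorem (the absolutely convergent Fourier series of the continuous function $g_\lambda*f$, evaluated at $0$) --- your version is slightly more self-contained, the paper's leans on standard Fourier-series machinery, and both hinge on the identical expansion and bookkeeping.
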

\begin{proof} 

For a nonzero real $\lambda$,  consider the continuous, $2\pi$-periodic, function $g_\lambda$ defined by
\begin{equation}
g_\lambda(t)=\cot\left(\frac{t-i\lambda}{2}\right).
\end{equation}
The exponential Fourier coefficients $(C_n(g_\lambda))_{n\in\ent}$ of $g_\lambda$ are given by
\begin{equation}\label{E:cng}
C_n(g_\lambda)=i(\sgn(\lambda)-\sgn(n))e^{-\abs{n\lambda}}
\end{equation}
\bg
Indeed, let us consider two cases :\bg
\begin{itemize}
\item For $\lambda<0$, we have
\begin{align*}
i g_\lambda(t)&=\frac{1+e^{\lambda+it}}{1-e^{\lambda+it}}=\frac{2}{1-e^{\lambda+it}}-1\\
&=1+\sum_{n=1}^\infty 2e^{n\lambda} e^{int}.
\end{align*}
\item And, for $\lambda>0$, we have
\begin{align*}
i g_\lambda(t)&=\frac{e^{-\lambda-it}+1}{e^{-\lambda-it}-1}=1-\frac{2}{1-e^{-\lambda-it}}\\
&= -1-\sum_{n=1}^\infty2e^{-n\lambda}e^{-int}
= -1-\sum_{n=-\infty}^{-1}2e^{n\lambda}e^{int}.
\end{align*}
\end{itemize}
Combining these two points  proves \eqref{E:cng}.\bg
\qquad Now, since $\abs{C_n(f)}\leq \N{f}_1$ for every $n\in\ent$, we see that 
\[
\sum_{n\in\ent}\abs{C_n(g_\lambda)}\abs{C_n(f)} <+\infty.
\] 
Thus, the continuous, $2\pi$-periodic, function $g_\lambda*f$ is equal  to its Fourier series expansion. In particular,
\[
g_\lambda*f(0)=\frac{1}{2\pi}\int_0^{2\pi}f(t)g_\lambda(-t)\,dt=\sum_{n\in\ent}C_n(g_\lambda)C_n(f)
\]
That is, by \eqref{E:cng},
\[
\frac{1}{2\pi}\int_0^{2\pi}f(t)\cot\left(\frac{t+i\lambda}{2}\right)\,dt
=-i\sum_{n\ent}((\sgn(\lambda)-\sgn(n))e^{-\abs{n\lambda}})C_n(f).
\]
Finally, using Proposition \ref{pr42}, we obtain \eqref{E:pr431}.
\end{proof}
\bg

\qquad Now, combining Corollary \ref{co41}, Proposition \ref{pr41} and Proposition \ref{pr43}, we come to our main Theorem.\bg

\begin{theorem} \label{th41} Consider $f\in L^1(\tore)$ and a positive real $\lambda$, then the following  two integrals are
convergent
\[
\int_0^\infty\frac{f_e(t)}{\lambda^2+t^2}\,dt\qquad\text{and}\qquad \int_0^\infty\frac{tf_o(t)}{\lambda^2+t^2}\,dt
\]
where $f_e$ and $f_o$ are, respectively, the even and odd components of $f$ defined by \eqref{E:4feo}.  Moreover,
\begin{align}
\int_0^\infty\frac{f_e(t)}{\lambda^2+t^2}\,dt&=\frac{\pi}{2 \lambda}\sum_{n\in\ent}e^{-\abs{n}\lambda}C_n(f),\label{E:th411}\\
\int_0^\infty\frac{t f_o(t)}{\lambda^2+t^2}\,dt&=\frac{i\pi}{2}\sum_{n\in\ent}\sgn(n)e^{-\abs{n}\lambda}C_n(f).\label{E:th412}
\end{align}
\end{theorem}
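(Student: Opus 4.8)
The plan is to obtain both identities purely by assembling the three results already proved, since no fresh analysis is required. First I would dispose of convergence, which is immediate: the first integral is exactly $\mathcal{I}_\lambda(f)$ from \eqref{E:4I}, and since the even part $f_e$ again lies in $L^1(\tore)$, part \itemref{pr411} of Proposition \ref{pr41} shows it converges absolutely; the second integral is $\mathcal{J}_\lambda(f)$ from \eqref{E:4J}, and because $f_o$ is odd we have $\int_\tore f_o(t)\,dt=0$, so part \itemref{pr412} of the same proposition yields its convergence.

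For the explicit values, the idea is to substitute the closed form \eqref{E:pr431} of Proposition \ref{pr43} into the expressions \eqref{E:co411} and \eqref{E:co412} of Corollary \ref{co41}. Since $\lambda>0$, evaluating \eqref{E:pr431} at $\lambda$ and at $-\lambda$ uses $\sgn(\lambda)=1$ and $\sgn(-\lambda)=-1$, while the exponent satisfies $\abs{n\lambda}=\abs{n}\lambda$ in both cases. This produces
\[
\mathcal{L}_\lambda(f)=i\pi\sum_{n\in\ent}(\sgn(n)-1)e^{-\abs{n}\lambda}C_n(f),\qquad
\mathcal{L}_{-\lambda}(f)=i\pi\sum_{n\in\ent}(\sgn(n)+1)e^{-\abs{n}\lambda}C_n(f).
\]

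Next I would form the difference and the sum of these two series. In the difference the $\sgn(n)$ terms cancel, and in the sum the constant $\pm1$ terms cancel, leaving
\[
\mathcal{L}_\lambda(f)-\mathcal{L}_{-\lambda}(f)=-2i\pi\sum_{n\in\ent}e^{-\abs{n}\lambda}C_n(f),\qquad
\mathcal{L}_\lambda(f)+\mathcal{L}_{-\lambda}(f)=2i\pi\sum_{n\in\ent}\sgn(n)e^{-\abs{n}\lambda}C_n(f).
\]
Substituting the first into \eqref{E:co411} and reducing the scalar $\frac{i}{4\lambda}\cdot(-2i\pi)=\frac{\pi}{2\lambda}$ gives \eqref{E:th411}; substituting the second into \eqref{E:co412} with $\frac14\cdot 2i\pi=\frac{i\pi}{2}$ gives \eqref{E:th412}.

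There is no genuinely hard step here: the proof is a direct combination of Propositions \ref{pr41} and \ref{pr43} with Corollary \ref{co41}. The only point demanding care is the sign bookkeeping when passing from $\lambda$ to $-\lambda$ inside $\sgn$, together with the observation that the decaying factor $e^{-\abs{n\lambda}}$ equals the same $e^{-\abs{n}\lambda}$ for both signs of $\lambda$; this is precisely what makes the difference and the sum collapse to the clean one-sided series appearing in the statement.
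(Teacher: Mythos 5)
Your proof is correct and follows exactly the route the paper intends: the paper itself gives no separate argument for Theorem \ref{th41}, stating only that it comes from ``combining Corollary \ref{co41}, Proposition \ref{pr41} and Proposition \ref{pr43},'' which is precisely your assembly, and your sign bookkeeping ($\sgn(\pm\lambda)=\pm1$, $e^{-\abs{n\lambda}}=e^{-\abs{n}\lambda}$, the cancellations in the sum and difference, and the scalar reductions $\frac{i}{4\lambda}\cdot(-2i\pi)=\frac{\pi}{2\lambda}$ and $\frac14\cdot 2i\pi=\frac{i\pi}{2}$) is accurate. Your convergence argument (applying part \itemref{pr411} to $f_e\in L^1(\tore)$ and part \itemref{pr412} to $f_o$, which has zero mean by oddness) is also exactly what the paper's framework provides.
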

\bg
\begin{corollary}\label{cor2} Consider $f\in L^1(\tore)$ and a positive real $\lambda$.
\begin{enumeratei}
\item If $f$ is \textrm{even}, and its Fourier series expansion is 
$\ds S[f](x)=\frac{a_0}{2}+\sum_{n\geq1}a_n\cos nx$, then
\[
\int_0^\infty\frac{f(t)}{\lambda^2+t^2}\,dt
=\frac{\pi}{2 \lambda}\left(\frac{a_0}{2}+\sum_{n=1}^\infty a_n e^{-n\lambda}\right).
\]
\item If $f$ is  \textrm{odd}, and its Fourier series expansion is 
$\ds S[f](x)= \sum_{n\geq1}b_n\sin nx$, then
\[
\int_0^\infty\frac{t f(t)}{\lambda^2+t^2}\,dt
=\frac{\pi}{2}\sum_{n\geq1}  b_n e^{-n\lambda}.
\]
\end{enumeratei}
\end{corollary}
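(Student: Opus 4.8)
The plan is to deduce both identities directly from Theorem \ref{th41} by specialising to the even and odd cases and by translating the exponential Fourier coefficients $C_n(f)$ into the trigonometric ones. The only ingredient I need to recall is the standard passage between the two conventions: writing $f\sim \frac{a_0}{2}+\sum_{n\geq1}(a_n\cos nx+b_n\sin nx)$ and using $\cos nx=\frac{1}{2}(e^{inx}+e^{-inx})$, $\sin nx=\frac{1}{2i}(e^{inx}-e^{-inx})$, one reads off
\[
C_0(f)=\frac{a_0}{2},\qquad C_n(f)=\frac{a_n-ib_n}{2},\qquad C_{-n}(f)=\frac{a_n+ib_n}{2}\quad(n\geq1).
\]
In particular $C_n(f)+C_{-n}(f)=a_n$ and $C_n(f)-C_{-n}(f)=-ib_n$ for $n\geq1$.

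For part (i), $f$ even gives $f_e=f$ and $b_n=0$, hence $C_n(f)=C_{-n}(f)=\frac{1}{2}a_{\abs n}$. I would substitute this into the right-hand side of \eqref{E:th411} and pair each index $n\geq1$ with $-n$; since the weight $e^{-\abs n\lambda}$ is even in $n$, the bilateral sum collapses to $C_0(f)+\sum_{n\geq1}e^{-n\lambda}(C_n(f)+C_{-n}(f))=\frac{a_0}{2}+\sum_{n\geq1}a_n e^{-n\lambda}$, which is exactly the claimed expression after multiplication by $\frac{\pi}{2\lambda}$.

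For part (ii), $f$ odd gives $f_o=f$ and $a_n=0$, hence $C_{\pm n}(f)=\mp\frac{i}{2}b_n$ and $C_0(f)=0$. Here I would use \eqref{E:th412}: the factor $\sgn(n)$ annihilates the $n=0$ term and, crucially, flips the sign between the $n$ and $-n$ contributions, so that instead of cancelling they combine as $\sum_{n\geq1}e^{-n\lambda}(C_n(f)-C_{-n}(f))=-i\sum_{n\geq1}b_n e^{-n\lambda}$. Multiplying by the prefactor $\frac{i\pi}{2}$ turns the $-i$ into a real constant and yields $\frac{\pi}{2}\sum_{n\geq1}b_n e^{-n\lambda}$, as asserted.

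There is no genuine obstacle: the whole argument is a reindexing of an absolutely convergent series, the convergence being guaranteed by $\abs{C_n(f)}\leq\N{f}_1$ together with the geometric decay of $e^{-\abs n\lambda}$ for $\lambda>0$. The only point demanding care is the bookkeeping of the factors of $i$ and of $\sgn(n)$, which is precisely what distinguishes the even case (the terms add through the even weight) from the odd case (the terms add through $\sgn(n)$).
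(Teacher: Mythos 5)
Your proof is correct and follows exactly the route the paper intends: the paper states Corollary \ref{cor2} without proof as an immediate consequence of Theorem \ref{th41}, and your specialisation to the even and odd cases, together with the conversion $C_{\pm n}(f)=\frac{a_n\mp ib_n}{2}$ and the pairing of the $n$ and $-n$ terms in the absolutely convergent bilateral sums, is precisely the bookkeeping being left to the reader. Both sign computations (the even weight $e^{-\abs{n}\lambda}$ adding the terms in case (i), and $\sgn(n)$ producing $C_n-C_{-n}=-ib_n$ against the prefactor $\frac{i\pi}{2}$ in case (ii)) are accurate.
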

\bg

\qquad There is a case where Theorem \ref{th41} takes a more practical form. So, let us change our point of view. 
Let $G:\Omega\vers \comp$ be an analytic function
 on a domain $\Omega$, that \textit{contains} the closed
unit disk $\adh{D(0,1)}$. For a positive real number
$\lambda$, we are interested in determining the value of the integrals :
\begin{align}
I_\lambda(G) &=\int_0^\infty\frac{g_c(\theta)}{\lambda^2+\theta^2}\,d\theta=\frac{1}{ \lambda }\int_0^{\pi/2}g_c(\lambda \tan x)\,dx,  \label{E:I}\\
\noalign{\text{and}}\notag\\
J_\lambda(G) &=\int_0^\infty\frac{\theta g_s(\theta)}{\lambda^2+\theta^2}\,d\theta
=\int_0^{\pi/2}g_s( \lambda  \tan x)\tan x\,dx, 
\label{E:J}
\end{align}
where the real variable functions $g_c$ and $g_s$ are, respectively,  the ``cos'' and the ``sin'' components 
of the function $\theta\mapsto G(e^{i\theta})$, defined on $\reel$ by the formul\ae:
\begin{equation}\label{E:feo}
g_c(\theta) =\frac{G(e^{i\theta})+G(e^{-i\theta})}{2}\quad\text{and}\quad
g_s(\theta) =\frac{G(e^{i\theta})-G(e^{-i\theta})}{2i}.
\end{equation}

\qquad The next theorem gives the answer to this question :

\begin{theorem}\label{th1}
 Let $G$ be an anlytic function on a domain $\Omega$ that contains the closed unit disk, then, for every positive real $\lambda$, one has
\begin{align}
\int_0^{\infty}\frac{g_c(x)}{\lambda^2+x^2}\,dx&=\frac{\pi}{2 \lambda }G(e^{- \lambda }),\label{E:I2}\\
\noalign{\text{and}}\notag\\
\int_0^{\infty}\frac{x g_s(x)}{\lambda^2+x^2}\,dx&=\frac{\pi}{2}\left(G(e^{- \lambda })-G(0)\right),
\label{E:J2}
\end{align}
where $g_c$ and $g_s$ are defined by \eqref{E:feo}.
\end{theorem}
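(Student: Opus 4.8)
The plan is to deduce Theorem \ref{th1} from Theorem \ref{th41} by exhibiting the right $2\pi$-periodic function. The natural choice is $f(\theta)=G(e^{i\theta})$. Since $\overline{D(0,1)}$ is a compact subset of the open set $\Omega$, there is some $\rho>1$ with $D(0,\rho)\subseteq\Omega$, so the Taylor expansion $G(z)=\sum_{n\geq0}a_nz^n$ has radius of convergence at least $\rho$ and thus converges absolutely and uniformly on $\overline{D(0,1)}$. Consequently $f$ is continuous and $2\pi$-periodic, so $f\in L^1(\tore)$ and Theorem \ref{th41} is applicable.

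Next I would match up the components. Comparing \eqref{E:4feo} with \eqref{E:feo} directly, the even part of $f$ is $f_e=g_c$, while the odd part is $f_o=i\,g_s$, the factor $i$ arising from the $1/(2i)$ in the definition of $g_s$. Hence the two integrals in Theorem \ref{th1} are precisely $\int_0^\infty g_c(x)/(\lambda^2+x^2)\,dx=\int_0^\infty f_e(x)/(\lambda^2+x^2)\,dx$ and $\int_0^\infty x g_s(x)/(\lambda^2+x^2)\,dx=\tfrac{1}{i}\int_0^\infty x f_o(x)/(\lambda^2+x^2)\,dx$, whose convergence is already supplied by Theorem \ref{th41}.

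The one genuinely analytic step is computing the exponential Fourier coefficients $C_n(f)$, and this is where I expect to have to be careful, since it is the only place the hypothesis of analyticity is really used; everything else is formal bookkeeping. Because the Taylor series converges uniformly on the unit circle, it can be integrated term by term against $e^{-in\theta}$, which gives $C_n(f)=a_n$ for $n\geq0$ and $C_n(f)=0$ for $n<0$. Substituting these into \eqref{E:th411} collapses the bilateral sum to $\sum_{n\geq0}a_n e^{-n\lambda}=G(e^{-\lambda})$, which is exactly \eqref{E:I2}.

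For the second identity, I would feed the same coefficients into \eqref{E:th412}, using $\sgn(n)C_n(f)=a_n$ for $n\geq1$ and $0$ otherwise, to obtain $\int_0^\infty x f_o(x)/(\lambda^2+x^2)\,dx=\tfrac{i\pi}{2}\sum_{n\geq1}a_n e^{-n\lambda}$. Dividing by $i$ and writing $\sum_{n\geq1}a_n e^{-n\lambda}=G(e^{-\lambda})-a_0=G(e^{-\lambda})-G(0)$ yields \eqref{E:J2}. The only points to watch are the factor $i$ relating $f_o$ and $g_s$ and the missing $n=0$ term, which together account for the $-G(0)$ correction.
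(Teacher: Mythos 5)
Your proposal is correct and follows essentially the same route as the paper's own proof: setting $f(\theta)=G(e^{i\theta})$, identifying $f_e=g_c$ and $f_o=ig_s$, reading off $C_n(f)=a_n$ for $n\geq0$ and $C_n(f)=0$ for $n<0$, and substituting into \eqref{E:th411} and \eqref{E:th412}. The only difference is that you spell out the justifications (compactness giving $\rho>1$, uniform convergence permitting term-by-term integration) that the paper leaves implicit.
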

\begin{proof}
Indeed, by assumption, there is a power series $\sum_{n=0}^\infty a_nz^n$ with radius of convergence $\rho>1$ such that
\begin{equation}\label{E:G1}
\forall\,z\in\comp,\qquad \abs{z}<\rho\impl G(z)=\sum_{n=0}^\infty a_nz^n
\end{equation}
So, we can define a continuous, $2\pi$-periodic function $f$ by $f(\theta)=G(e^{i\theta})$. 
Comparing \eqref{E:feo} and \eqref{E:4feo} we see that
$f_e(\theta)=g_c(\theta)$ and $f_o(\theta)=i g_s(\theta)$. On the other hand, from \eqref{E:G1} we conclude that
the Fourier coefficients of $f$ are given by $C_n(f)=a_n$ if $n\geq0$ and $C_n(f)=0$ if $n<0$. Thus,   Theorem \ref{th1} proves
\begin{align*}
\int_0^\infty\frac{g_c(t)}{\lambda^2+t^2}\,dt&=\frac{\pi}{2 \lambda}\sum_{n\geq0}a_n e^{-n\lambda}=\frac{\pi}{2 \lambda}G(e^{-\lambda}),\\
\int_0^\infty\frac{t g_s(t)}{\lambda^2+t^2}\,dt&=\frac{\pi}{2}\sum_{n>0} a_ne^{-n\lambda}=\frac{\pi}{2}\left(G(e^{-\lambda})-G(0)\right).
\end{align*}
Which is the desired conclusion. 
\end{proof}

\bg

\section{\sc Examples and Applications}\label{sec4}
\nobreak
\begin{example} \end{example}
\qquad  Let $x$ be a positive number, and consider the analytic function $G$ defined 
on the domaine $\Omega=\comp\setminus\{e^{2x}\}$, by
\begin{equation}\label{E:ex11}
G(z)=\frac{e^{2x}+z}{e^{2x}- z}.
\end{equation}
Here, it is straightforward to check that
\begin{equation}
G(e^{i\theta})=\frac{ \sinh2x +  i \sin\theta  }{ \cosh2x-  \cos\theta},
\end{equation}
so
\begin{align}
g_c(\theta)&= \frac{ \sinh2x }{\cosh2x- \cos\theta},\label{E:ex22}\\
g_s(\theta)&= \frac{ \sin \theta }{\cosh2x-  \cos\theta}.\label{E:ex23}
\end{align}

\qquad Using Theorem \ref{th1} we see that, for $\lambda>0$, we have
\begin{equation}
\int_0^\infty\frac{g_c(\theta)}{\lambda^2+\theta^2}d\theta
=\frac{\pi}{2 \lambda}\cdot\frac{e^{2x}+e^{-\lambda}}{e^{2x}- e^{-\lambda}}
\end{equation}
Thus, from \eqref{E:ex22}, and after making the change of variables $\theta\leftarrow2\theta$ and $\lambda\leftarrow2\lambda$, we get :
\begin{equation*}
\int_0^\infty \frac{ 1}{\cosh2x- \cos2\theta}\cdot\frac{d\theta}{ \lambda^2+ \theta^2}
=\frac{\pi}{2 \lambda\sinh2x}\cdot\frac{e^{2x}+e^{-2\lambda}}{e^{2x}- e^{-2\lambda}}
\end{equation*}
or, equivalently
\begin{equation}
\int_0^\infty \frac{ 1}{ \sinh^2x + \sin^2\theta}\cdot\frac{d\theta}{ \lambda^2+ \theta^2}
=\frac{\pi}{\lambda\sinh2x}\cdot\frac{e^{2x}+e^{-2\lambda}}{e^{2x}- e^{-2\lambda}}
\end{equation}
This can be expressed as follows : For positive $\lambda$ and $\mu$, we have 
\begin{equation}
\int_0^\infty\frac{d\theta}{(\mu^2+\sin^2\theta)(\lambda^2+\theta^2)}
=\frac{\pi}{2\lambda\mu\sqrt{1+\mu^2}}\cdot
\frac{e^{2\lambda}+\left(\sqrt{1+\mu^2}-\mu\right)^2 }{e^{2\lambda}-\left(\sqrt{1+\mu^2}-\mu\right)^2}.
\end{equation}
In particular, choosing $\lambda=\mu=1$ we find the integral \eqref{E:eq1} that we used to introduce our discussion.
\bg
\qquad On the other hand, using $g_s$ from \eqref{E:ex23}, and the second part of Theorem \ref{th1}, we find that, for $\lambda>0$, we have

\begin{equation}
\int_0^\infty\frac{ \theta \sin \theta }{(\cosh2x-\cos\theta)(\lambda^2+\theta^2)}\,d\theta
=\frac{\pi}{e^{2x+\lambda}- 1},
\end{equation}
or, equivalently,  for $\mu>1$ and $\lambda>0$ :
\begin{equation}
\int_0^\infty\frac{ \theta \sin \theta }{(\mu-\cos\theta)(\lambda^2+\theta^2)}\,d\theta
=\frac{\pi}{e^{\lambda} (\mu+\sqrt{\mu^2-1})-1}.
\end{equation}
The integrals in this example are to be compared with \cite[formul\ae~ 3.792(10) and 3.792(13), page 450]{grad}.
\bg
\begin{example}\end{example}

\qquad In our second example we consider the even function $f\in L^1(\tore)$ defined by
\begin{equation}
f(t)=-\ln \abs{\sin(t/2)}=-\frac{1}{2}\ln\sin^2(t/2),\quad\text{for $t\notin2\pi\ent$.}
\end{equation}
It is well-known, (see for instance \cite[Chapter~3,~\$\,14]{tol},) that $f$ has the following Fourier series expansion 
\[
S[f](t)=\ln2+\sum_{n=1}^\infty\frac{\cos nt}{n}
\]
Thus, applying Corollary \ref{cor2} we obtain that for every $\lambda>0$ we have
\[
\int_0^\infty\frac{\ln(\sin^2(t/2))}{\lambda^2+t^2}\,dt
=-\frac{\pi}{\lambda}\left(\ln2+\sum_{n=1}^\infty\frac{e^{-n\lambda}}{n}\right)
=-\frac{\pi}{\lambda}\left(\ln 2-\ln(1-e^{-\lambda})\right).
\]
The change of variables $t\leftarrow2t$ and $\lambda\leftarrow2\lambda$ yields the  following result:
\begin{equation}
\text{ for $\lambda>0$,}\qquad\int_0^\infty\frac{\ln(\sin^2t)}{ \lambda^2+ t^2}\,dt=\frac{\pi}{\lambda}\ln\left(\frac{1-e^{-2\lambda}}{2}\right).
\end{equation}

\qquad Our next example is a generalization of an old problem.
\bg
\begin{example} A Generalization of A Problem of Narayana Aiyar.\end{example}

\qquad Let $t_1,t_2,\ldots,t_n$ and $a$ be positive real numbers, such that
\[
0<t_1\leq t_2\leq \ldots\leq t_n<a.
\]
Consider the meromorphic function $G$ defined by
\begin{equation}
G(z)=\frac{1}{(a-t_1z)(a-t_2 z)\cdots(a-t_n z)}
\end{equation}
Clearly, $G$ is analytic in the domain $\Omega=\comp\setminus\left\{\frac{a}{t_k}:1\leq k\leq n\right\}$
that contains the closed unit disk.\bg

 For a given real $\theta$, let
\begin{equation}\label{E:rf}
\phi_k(\theta)=\arctan\left(\frac{t_k\sin\theta}{a-t_k\cos\theta}\right),\qquad
 \rho_k(\theta)=\sqrt{a^2-2t_k a\cos\theta+t_k^2}.
\end{equation}
To simplify the notation, we will simply write $\phi_k$ and $\rho_k$ to denote $\phi_k(\theta)$ and $\rho_k(\theta)$
respectively. It is clear that 
\begin{equation}
a-t_k e^{i\theta}=\rho_k e^{-i\phi_k}, \qquad\text{for $1\leq k\leq n$}.
\end{equation}
Thus,
\begin{equation}
G(e^{i\theta})=\frac{e^{i(\phi_1+\cdots+\phi_n)}}{\rho_1\cdots \rho_n}.
\end{equation}
On the other hand, since  $G(e^{-i\theta})=\adh{G(e^{i\theta})}$, we see immediately that
\begin{equation}
g_c(\theta)=\frac{\cos(\phi_1+\cdots+\phi_n)}{\rho_1\cdots \rho_n},\qquad\hbox{and}\qquad
g_s(\theta)=\frac{\sin(\phi_1+\cdots+\phi_n)}{\rho_1\cdots \rho_n}.
\end{equation}
Therefore, using Theorem \ref{th1}, with $\lambda=1$, we obtain 
$I_1(G)-J_1(G)=\frac{\pi}{2}G(0)$, that is
\begin{equation}\label{E:aiyar}
\int_0^\infty\frac{\cos(\phi_1+\cdots+\phi_n)-\theta \sin(\phi_1+\cdots+\phi_n)}{\rho_1\cdots \rho_n}\frac{d\theta}{1+\theta^2}=\frac{\pi}{2a^n}.
\end{equation}

\qquad The evaluation of the integral \eqref{E:aiyar}, when $a=1$ and $t_k=k r$ for some $0<r<1/n$, is an unsolved problem
proposed by Narayana Aiyar in the begining of the twentieth century \cite{berndt}, while the generalization,
corresponding to $a>0$ and $t_k=k r$ for some $0<r<a/n$, is a problem proposed by M.~D.~ Hirchhorn \cite{Hirch}.
\bg
\qquad Note that Theorem \ref{th1} yields the following more precise results, valid for  $\lambda>0$ :
\begin{align}
\int_0^\infty\frac{\cos(\phi_1+\cdots+\phi_n)}{\rho_1\cdots \rho_n}\cdot\frac{d\theta}{\lambda^2+\theta^2}&=
\frac{\pi}{2\lambda}\prod_{k=1}^n\frac{1}{a-t_ke^{-\lambda}},\\
\noalign{\text{and}}\notag\\
\int_0^\infty\frac{\sin(\phi_1+\cdots+\phi_n)}{\rho_1\cdots \rho_n}\cdot\frac{\theta\, d\theta}{\lambda^2+\theta^2}&=
\frac{\pi}{2}\left(\prod_{k=1}^n\frac{1}{a-t_ke^{-\lambda}}-\frac{1}{a^n}\right).
\end{align}
where $t_1,\ldots,t_n$ are real numbers from the interval $(0,a)$ and the $\phi_k$'s and $\rho_k$'s are defined by \eqref{E:rf}.
\bg

\begin{example} Bernoulli Polynomials and Polylogarithms.\end{example}
\qquad In this example we consider the sequence $(B_m)_{m\geq0}$ of \textit{Bernoulli polynomials}. They can be 
defined via the generating function
\begin{equation}
\frac{te^{tx}}{e^t-1}=\sum_{m=0}^\infty B_m(x)\frac{t^m}{m!},
\end{equation}
or recursively by $B_0 =1$ and 
\begin{equation}\label{E:bn}
\forall\,n>0,\qquad B_n^\prime=B_{n-1},\quad\text{and }\quad \int_0^1B_n(t)\,dt=0.
\end{equation}
In particular, 
\begin{equation}
B_1(X)=X-\frac12,\quad B_2(X)=X^2-X+\frac16,\qquad B_3(X)=X(X-\frac12)(X-1).
\end{equation}
\bg

\qquad Now, denote by $\widetilde{B}_m$ the $2\pi$-periodic function defined by
\begin{equation}
\widetilde{B}_m(x)=B_m\left(\left\{\frac{x}{2\pi}\right\}\right),\quad\text{where $\{u\}$ is the fractional part of $u$.}
\end{equation}
\bg
\qquad For $m \geq1$, the Fourier series expansion of $ \widetilde{B}_m$, is well-known and  easy to find
(using the recursive definition \eqref{E:bn}, see, for example \cite[Chapter~23]{abr}.) We have
\begin{align}
S[\widetilde{B}_{2m-1}](x)&=\frac{(-1)^m2(2m-1)!}{(2\pi)^{2m-1}}\sum_{n=1}^\infty\frac{\sin(nx)}{n^{2m-1}}\\
S[\widetilde{B}_{2m}](x)&=\frac{(-1)^m2(2m)!}{(2\pi)^{2m}}\sum_{n=1}^\infty\frac{\cos(nx)}{n^{2m}}
\end{align} \bg
\qquad Applying Corollary \ref{cor2} we conclude that, for $m\geq1$ and $\lambda>0$, we have
\begin{align*}
\int_0^\infty\frac{x}{\lambda^2+x^2}\widetilde{B}_{2m-1}(x)\,dx&=
\frac{(-1)^m(2m-1)!}{2(2\pi)^{2m-2}}\sum_{n=1}^\infty\frac{e^{-n\lambda}}{n^{2m-1}}\\
\int_0^\infty\frac{1}{\lambda^2+x^2}\widetilde{B}_{2m}(x)\,dx&=
\frac{(-1)^m(2m)!}{2\lambda(2\pi)^{2m-1}}\sum_{n=1}^\infty\frac{e^{-n\lambda}}{n^{2m}}
\end{align*}
The change of variables $x\leftarrow2\pi x$ and $\lambda\leftarrow2\pi\lambda$ yields the  following result, for $m\geq1$ and $\lambda>0$ :
\begin{align}
\int_0^\infty\frac{x B_{2m-1}(\{x\})}{\lambda^2+x^2} \,dx&=(-1)^m\,\frac{(2m-1)!}{2(2\pi)^{2m-2}}\,\li_{2m-1}(e^{-2\pi\lambda})\\
\int_0^\infty\frac{B_{2m}(\{x\})}{\lambda^2+x^2} \,dx&=(-1)^m\,\frac{(2m)!}{2(2\pi)^{2m-1} }\,\frac{\li_{2m}(e^{-2\pi\lambda})}{\lambda}
\end{align}
where the function $\li_k$ is the polylogarithm of order $k$. The function $\li_k$ is 
defined on open unit disk by the series $\sum_{n=1}^\infty z^n/n^k$. (For an extensive account of the polylogarithms see \cite{lew}.)

In particular, since $B_1(X)=X-\frac{1}{2}$, we see that $\lambda>0$, we have 
\begin{equation}\label{E:Li1}
\int_0^\infty\frac{x (\{x\}-1/2)}{\lambda^2+x^2} \,dx =\frac{1}{2}\ln(1-e^{-2\pi\lambda}).
\end{equation}
Also, from the expressions of $B_2$ and $B_3$ we conclude that, for $\lambda>0$, we have
\begin{align}
\int_0^\infty\frac{\{x\}(1-\{x\})}{\lambda^2+x^2} \,dx& =\frac{\pi}{12\lambda}+
\frac{1}{2\pi\lambda }\li_{2}(e^{-2\pi\lambda}).\\
\int_0^\infty\frac{x\{x\}(\{x\}-1)(\{x\}-1/2)}{\lambda^2+x^2} \,dx& =
\frac{3}{4\pi^2 }\li_{3}(e^{-2\pi\lambda}).\label{E:Li3}
\end{align}
\bg
Adding one forth of \eqref{E:Li1} to \eqref{E:Li3} we see that, for $\lambda>0$, we have
\begin{equation}
\int_0^\infty\frac{x(\{x\}-1/2)^3}{\lambda^2+x^2} \,dx  =\frac{1}{8}\ln(1-e^{-2\pi\lambda})+ \frac{3}{4\pi^2 }\li_{3}(e^{-2\pi\lambda}).
\end{equation}
\bg

\begin{fexample}\end{fexample}
\qquad We will end our discussion by citing some results that can
be proved using the methods of this paper, without presenting the details :
\begin{itemize}
\item For positive real numbers $\mu$ and $\lambda$, we have
\begin{equation}
\int_0^\infty\frac{x\arctan(\mu \sin x)}{x^2+\lambda^2}\,dx
=\frac{\pi}{2}\ln\frac{\mu e^\lambda+\sqrt{1+\mu^2}-1}{\mu e^\lambda-\sqrt{1+\mu^2}+1}.
\end{equation}

\item For positive real numbers $\mu$ and $\lambda$, we have
\begin{equation}
\int_0^\infty\frac{\arctan(\mu \cos x)}{x^2+\lambda^2}\,dx
=\frac{\pi}{\lambda}\arctan\frac{\sqrt{1+\mu^2}-1}{\mu e^\lambda}.
\end{equation}

\item For real numbers $\mu$ and $\lambda$ such that $\mu\in(-1,1)$ and $\lambda>0$, we have
\begin{equation}
\int_0^\infty\frac{1}{x^2+\lambda^2}\ln \frac{1+\mu \cos x}{1-\mu \cos x}\,dx
=\frac{\pi}{\lambda}\ln\frac{\mu e^\lambda-\sqrt{1-\mu^2}+1}{\mu e^\lambda+\sqrt{1-\mu^2}-1}.
\end{equation}

\item For real numbers $\mu$ and $\lambda$ such that $\mu\in(-1,1)$ and $\lambda>0$, we have
\begin{equation}
\int_0^\infty\frac{x}{x^2+\lambda^2}\ln \frac{1+\mu \sin x}{1-\mu \sin x}\,dx
=2\pi\arctan\frac{1-\sqrt{1-\mu^2}}{\mu e^\lambda}.
\end{equation}

\item For  $z\in \comp$ and a positive $\lambda$, we have
\begin{equation}
\int_0^\infty\frac{e^{iz\cos x}}{x^2+\lambda^2} \,dx
=\frac{\pi}{\lambda}\left(J_0(z)+2\sum_{n=1}^\infty i^n J_n(z)e^{-n\lambda}\right),
\end{equation}
where $J_n$ is the well-known Bessel function of the first kind of order $n$.

\item For positive $\lambda$, we have
\begin{equation}
\int_0^\infty\frac{\ln(\tan^2x)}{x^2+\lambda^2} \,dx
=\frac{\pi}{\lambda}\ln(\tanh \lambda).
\end{equation}

\end{itemize}



\begin{thebibliography}{9}
\setlength{\itemsep}{5pt}

\bibitem{abr}
Abramowitz,~M. and Stegan,~I.~A.,  
\emph{Handbook of Mathematical Functions, with Formulas, Graphs, and
Mathematical Tables}, Dover Books on Mathematics,
 Dover Publication, Inc., New York, (1972).


\bibitem{ahl}
Ahlfors,~L.~V.,  
\emph{Complex Analysis}, third edition, McGraw-Hill, Inc. (1979).


\bibitem{ath}
Athreya,~K.~B.,  and Lahiri,~S.~N.,
\emph{Measure Theory and Probability Theory}, Springer Science+Business Media, LLC. (2006).


\bibitem{berndt}
Berndt,~C.,~B.,
\emph{The Chief Accountant and Mathematical Friend of Ramanujan -- S. Narayana Aiyar},
The American Mathematical Monthly, \textbf{118}(2011) 767--776.

http://dx.doi.org/10.4169/amer.math.monthly.118.09.767


\bibitem{siam}
Bornemann,~F., Lauire,~D., Wagon,~S., and Waldvogel,~J.,
\emph{The SIAM 100-Digit Challenge: A Study in High-Accuracy Numerical Computing.},
Philadelphia, PA: SIAM, (2004).

\bibitem{Hirch}
HirchHorn,~M.,~D.,
\emph{Problem 11600},
The American Mathematical Monthly, \textbf{118}(2011) 846.

http://dx.doi.org/10.4169/amer.math.monthly.118.09.846

\bibitem{grad}
Gradshteyn,~I. and Ryzhik,~I.,  
\emph{Tables of Integrals, Series and Products}, 7th ed., Academic Press, (2007).

\bibitem{lew}
Lewin,~L.,
\emph{Polylogarithms and associated functions},
Elsevier North Holland, Inc.,  (1981).

\bibitem{ter}
Terefethen,~L.-N.,
\emph{The \$100, 100-digit Challenge},
 SIAM News, \textbf {35}, No. 6, (2002), 1--3.

\bibitem{tol}
Tolstov,~G.~P. and Silverman,~R.~A.,
\emph{Fourier Series}, Dover Books on Mathematics,
 Dover Publication, Inc., New York, (1962).

\end{thebibliography}
\end{document}